\numberwithin{equation}{section}
\theoremstyle{plain}
\newtheorem{theorem}{Theorem}[section]
\newtheorem*{nthm}{Theorem}
\newtheorem{maintheorem}{Theorem}
\newtheorem{lemma}[theorem]{Lemma}
\newtheorem{corollary}[theorem]{Corollary}
\newtheorem*{conjecture}{Conjecture}
\theoremstyle{definition}
\theoremstyle{remark}
\newtheorem{remark}[theorem]{Remark}
\def\Ga{\Gamma}
\def\ga{\gamma}
\def\ra{\rightarrow}
\def\B{\mathbf{B}}
\def\C{\mathbb{C}}
\def\H{\mathbf{H}}
\def\N{\mathbb{N}}
\def\R{\mathbb{R}}
\DeclareMathOperator{\PU}{PU}
\title{Complex hyperbolic Kleinian groups of large critical exponents}
\author{Subhadip Dey \and Beibei Liu}
\begin{document}
\maketitle
\begin{abstract}
In this article, we show that there exist discrete isometry groups of the $2$- and $3$-dimensional complex hyperbolic spaces with critical exponents arbitrarily close to but strictly smaller than the maximum possible value. This result shows no gap in the values of critical exponents for complex hyperbolic Kleinian groups.  
\end{abstract}


\section{Introduction}
The study of discrete subgroups of semisimple Lie groups has been very fruitful in the last 50 years due to a series of rigidity results, starting with the Mostow rigidity theorem for finite volume hyperbolic manifolds of dimensions at least $3$ \cite{Mostow1} and its extension to other finite volume locally symmetric spaces \cite{Mostow2,Prasad}.
A deep consequence of the
Margulis superridigity theorem \cite{Margulis}, Corlette archimedean superrigidity theorem \cite{Corlette1}, and Gromov--Schoen nonarchimedean superrigidity theorem \cite{gromov1992harmonic}  is that the irreducible lattices in semisimple Lie groups (of noncompact type and whose Lie algebras do not contain $\mathfrak{so}(m,1)$ and $\mathfrak{su}(m,1)$ simple factors) are arithmetic. 
To the contrary, due to Gromov and Piatetski-Shapiro's \cite{GPS} hybrid constructions, nonarithmetic lattices exist in the isometry groups of real hyperbolic spaces of all dimensions.
In the complex hyperbolic case, nonarithmetic lattices also exist, but only a handful of examples are known, primarily due to Deligne--Mostow \cite{MR1241644}.
It is a central open question in the field whether complex hyperbolic nonarithmetic lattices exist in all dimensions.
This question gives evidence that the complex hyperbolic geometry is a somewhat elusive demarcation line between the relatively flexible structures of real hyperbolic geometries  and the super-rigid structures of the quaternionic hyperbolic, Cayley, and higher-rank side. 

Another rigidity principle that the semisimple Lie groups of noncompact type (whose Lie algebras do not contain $\mathfrak{so}(m,1)$ and $\mathfrak{su}(m,1)$ factors) share is  Kazhdan's Property (T) \cite{MR0209390,bekka2008kazhdan}. 
From this point of view, ${\rm PO}(m,1)$ and $\PU(m,1)$, the isometry groups of the $m$-dimensional real and complex hyperbolic spaces, respectively, are {\em flexible}, i.e., they do not have Property (T).
For the isometry groups of the quaternionic hyperbolic spaces and Cayley plane, Corlette \cite{Corlette2} noted that Property (T) restricts the possible ``sizes'' of  the limit sets of discrete subgroups $\Ga$.
Appealing to the classical {\em Patterson-Sullivan theory} \cite{Patterson,Sullivan2}, Corlette showed that the {\em critical exponent} $\Ga$, denoted by $\delta_\Ga$, gives a measurement for the size of its conical limit set.
He discovered a surprising gap phenomenon:

\begin{nthm}[Corlette]\leavevmode
\begin{enumerate}[label=(\roman*)]\itemsep0em
\item If $\Ga$ is a discrete isometry group of the $m$-dimensional quaternionic hyperbolic space, then either $\delta_{\Ga}=4m+2$ or $\delta_{\Ga}\leq 4m$. The subgroup $\Ga$ is a lattice if and only if $\delta_{\Ga}=4m+2$.
\item If $\Ga$ is a discrete isometry group of the hyperbolic Cayley plane, then $\delta_{\Ga}=22$ or $\delta_{\Ga}\leq 16$. The subgroup $\Ga$ is a lattice if and only if $\delta_{\Ga}=22$. 
\end{enumerate}
\end{nthm}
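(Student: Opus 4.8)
The plan is to turn the dynamical quantity $\delta_\Ga$ into the spectral parameter of a unitary representation of the ambient group, and then to invoke the classification of the spherical unitary dual of $\mathrm{Sp}(m,1)$ and $F_4^{-20}$, in which Property (T) shows up as a forbidden band of parameters just below the trivial representation. Write $G$ for the isometry group, $X=G/K$ for the associated rank-one symmetric space, and $Q$ for its volume entropy (the exponential growth rate of the volume of metric balls); in the normalization in which $\delta_\Ga$ ranges over $[0,Q]$ one has $Q=4m+2$ for quaternionic hyperbolic $m$-space, $Q=22$ for the Cayley plane, and $Q=2\rho$ with $\rho$ the half-sum of the multiplicity-counted positive restricted roots. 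I use throughout that $G$ has Property (T), which holds for $\mathrm{Sp}(m,1)$ with $m\ge 2$ and for $F_4^{-20}$ (the case $\mathrm{Sp}(1,1)$ being real hyperbolic $4$-space). Since $\rho<4m$ and $\rho=11<16$, the statement is vacuous when $\delta_\Ga\le\rho$, so I assume $\delta_\Ga>\rho$.

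First I would run Patterson--Sullivan theory. Patterson's construction yields a $\delta_\Ga$-conformal density $\mu=(\mu_x)_{x\in X}$ on $\partial X$, supported on the limit set, with $\ga_*\mu_x=\mu_{\ga x}$ and $d\mu_x/d\mu_y(\xi)=e^{-\delta_\Ga\beta_\xi(x,y)}$, where $\beta$ is the Busemann cocycle. Since horospheres of $X$ have constant mean curvature $Q$, the integrated density $\phi(x):=\|\mu_x\|=\int_{\partial X}e^{\delta_\Ga\beta_\xi(o,x)}\,d\mu_o(\xi)$ is a strictly positive, $\Ga$-invariant solution of $\Delta\phi=\delta_\Ga(Q-\delta_\Ga)\phi$ on $X$, and so descends to a positive eigenfunction on $\Ga\backslash X$; by the variational characterization of the bottom $\lambda_0(\Ga\backslash X)$ of the $L^2$-spectrum of the Laplacian in terms of positive eigenfunctions, this gives $\lambda_0(\Ga\backslash X)\ge\delta_\Ga(Q-\delta_\Ga)$. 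The reverse inequality $\lambda_0(\Ga\backslash X)\le\delta_\Ga(Q-\delta_\Ga)$ is the one that really uses the limit set: one manufactures, from $\phi$, compactly supported test functions whose Rayleigh quotients tend to $\delta_\Ga(Q-\delta_\Ga)$, the shadow lemma controlling the boundary contributions. Together these yield the Sullivan--Patterson formula $\lambda_0(\Ga\backslash X)=\delta_\Ga(Q-\delta_\Ga)$ for every discrete $\Ga$ with $\delta_\Ga\ge\rho$ (known for real hyperbolic space, and extended to the present setting by Corlette).

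Next I would pass to representations. Since $\lambda_0:=\delta_\Ga(Q-\delta_\Ga)$ lies in the $L^2$-spectrum of the Laplacian on $\Ga\backslash X$, there are $K$-invariant unit vectors $f_n\in L^2(\Ga\backslash G)$ with $\|(\Delta-\lambda_0)f_n\|\to 0$; a Fell limit of the associated bi-$K$-invariant positive-definite matrix coefficients yields a unitary representation $\pi$ of $G$, weakly contained in $L^2(\Ga\backslash G)$, possessing a $K$-fixed vector on which the Casimir acts by $\lambda_0$. Since a spherical unitary representation is determined by its Casimir parameter, $\pi$ is a multiple of the spherical representation $\pi_{\delta_\Ga}$ (the parameter $\delta_\Ga\in[\rho,Q]$ being the solution of $\delta(Q-\delta)=\lambda_0$), which is therefore unitary and weakly contained in $L^2(\Ga\backslash G)$. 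If $\delta_\Ga=Q$, then $\pi_{\delta_\Ga}$ is the trivial representation; by Property (T) weak containment upgrades to containment, so the constants lie in $L^2(\Ga\backslash G)$, forcing $\mathrm{vol}(\Ga\backslash G)<\infty$ and hence $\Ga$ a lattice. Conversely a lattice has $\delta_\Ga=Q$, by the standard comparison of orbit counting with volume growth. If instead $\delta_\Ga\in(\rho,Q)$, then $\lambda_0\in(0,\rho^2)$, so $\pi_{\delta_\Ga}$ is nontrivial (Casimir $\ne 0$), nontempered (Casimir $<\rho^2$), and spherical --- i.e.\ a point of the spherical complementary series of $G$.

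Finally I would quote Kostant's determination of the spherical unitary dual: for $G=\mathrm{Sp}(m,1)$ every nontrivial nontempered spherical unitary representation has parameter in $(\rho,4m]$, and for $G=F_4^{-20}$ in $(11,16]$; equivalently the open band $(4m,Q)$, resp.\ $(16,22)$, contains no such parameter. This empty band --- the separation between the top of the complementary series and the trivial representation at the parameter $Q$ --- is precisely what Property (T) buys; for $\mathrm{SO}(n,1)$ and $\mathrm{SU}(n,1)$, which lack it, the complementary series reaches the trivial representation instead. Applied to $\pi_{\delta_\Ga}$, Kostant's bound gives $\delta_\Ga\le 4m$, resp.\ $\delta_\Ga\le 16$, which completes the dichotomy. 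The step I expect to be the main obstacle is the reverse spectral inequality $\lambda_0(\Ga\backslash X)\le\delta_\Ga(Q-\delta_\Ga)$: this is where the conformal density and the shadow lemma must be fed into an honest construction of test functions, and it is what pins the parameter of $\pi_{\delta_\Ga}$ to $\delta_\Ga$ itself rather than to some value merely $\le\delta_\Ga$; everything afterward is soft spectral theory together with Kostant's list, whose exact endpoints $4m$ and $16$ --- strictly below $Q=4m+2$ and $22$ --- are the crux of the gap.
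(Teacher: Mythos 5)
The paper does not actually prove this statement: it is quoted as background and attributed to Corlette's \emph{Hausdorff dimensions of limit sets.\ I}, so the only comparison available is with Corlette's original argument. Your sketch faithfully reconstructs that argument --- Patterson--Sullivan theory yielding $\lambda_0(\Ga\backslash X)=\delta_\Ga(Q-\delta_\Ga)$ for $\delta_\Ga\ge\rho$, passage to a spherical representation weakly contained in $L^2(\Ga\backslash G)$, and the isolation of the trivial representation in the spherical unitary dual of $\mathrm{Sp}(m,1)$ and $F_4^{-20}$ with the complementary series terminating at parameter $4m$ (resp.\ $16$) --- so it is essentially the same route, with the correct identification of the reverse spectral inequality as the technical crux, and I see no gap.
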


There are some striking consequences of this result: For instance, in the quaternionic hyperbolic or Cayley geometry, {\em geometrically infinite} isometry groups are {\em small} in the sense of the critical exponents. 
Such phenomenon does not occur in real hyperbolic spaces; see \cite[Theorem 1.2]{BJ}.
Furthermore, the gap itself is absent for real hyperbolic spaces as Sullivan \cite{Sullivan1} constructed a sequence of convex cocompact isometry groups of the real hyperbolic space of dimension 3 whose critical exponents are arbitrarily close to $2$. See also \cite[Section 4]{Kapovich:2021ud} for such examples in real hyperbolic spaces of all dimensions.

In this article, we focus on the somewhat more mysterious complex hyperbolic spaces.
Since the Lie group $\PU(m,1)$, which is the group of holomorphic isometries of the complex hyperbolic $m$-space $\H_\C^m$, does not have Property (T), the analog of Corlette's gap theorem stated above has no immediate support.
On the other hand,
Sullivan's construction is hard to perform in the complex hyperbolic geometry, as there are no totally geodesic (real) codimension-one submanifolds of the complex hyperbolic spaces. 

Complex hyperbolic geometry, however, shares the rich features of K\"ahler geometry. 
In this article, we show that if there exists a compact complex hyperbolic $m$-manifold, 
which admits a non-constant holomorphic map  to a compact hyperbolic Riemann surface, then one can construct discrete subgroups of $\PU(m,1)$ with critical exponents arbitrarily close to but strictly smaller than the maximum possible value, $2m$. See \S\ref{sec:construction} and \S\ref{sec:proof} for more details. 
Since examples of such manifolds are known in dimensions $m=2,3$ (\cite{Livne,MR2753711}), we obtain the following result:

\begin{maintheorem}\label{mainthm}
Let $m=2$ or $3$.
Then, there exists a sequence $(\Gamma_n)$ of finitely generated
discrete isometry groups  of $\H_{\C}^{m}$ such that: 
\begin{enumerate}[label=(\roman*)]\itemsep0em
 \item For all $n\in\N$, the critical exponent $\delta_{\Ga_n}$ is strictly lesser than $2m$, and
 \item $\lim_{n\rightarrow \infty}\delta_{\Ga_n}=2m$.
\end{enumerate}
\end{maintheorem}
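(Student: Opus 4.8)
The plan is to realise the $\Gamma_n$ as a finitely generated exhaustion of the kernel of a surjection $\Gamma_0\twoheadrightarrow\Z$ built from the holomorphic map. For $m=2,3$, take a compact complex hyperbolic $m$-manifold $M=\H_\C^m/\Gamma_0$ admitting a non-constant holomorphic map onto a compact hyperbolic Riemann surface (\cite{Livne, MR2753711}). After Stein factorisation we may assume the map $f\colon M\to\Sigma$ is surjective with connected fibres; then, by the standard exact sequence $\pi_1(F)\to\pi_1(M)\to\pi_1(\Sigma)\to 1$ for a fibration over a curve ($F$ a generic, hence compact, fibre), the group $K:=\ker\bigl(f_*\colon\Gamma_0\to\pi_1(\Sigma)\bigr)$ is finitely generated. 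Composing $f_*$ with a surjection $\pi_1(\Sigma)\twoheadrightarrow\Z$ (available since $g(\Sigma)\ge 2$) gives $\phi\colon\Gamma_0\twoheadrightarrow\Z$; put $N:=\ker\phi$, so that $S:=\ker\bigl(\pi_1(\Sigma)\to\Z\bigr)$ is an infinitely generated free group (surface groups of genus $\ge 2$ do not algebraically fibre) and $1\to K\to N\to S\to 1$.

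Fix an exhaustion $S_1\subseteq S_2\subseteq\cdots$ of $S$ by finitely generated subgroups and set $\Gamma_n:=f_*^{-1}(S_n)$. Each $\Gamma_n$ is finitely generated, being an extension $1\to K\to\Gamma_n\to S_n\to 1$ of a finitely generated group by a finitely generated one; moreover $\Gamma_n\subseteq\Gamma_{n+1}$ and $\bigcup_n\Gamma_n=f_*^{-1}(S)=N$. Geometrically the quotients $M_n:=\H_\C^m/\Gamma_n$ form a tower of coverings $M_n\to M_{n+1}\to\cdots\to\widehat M\to M$, where $\widehat M\to M$ is the $\Z$-cover (so $\pi_1(\widehat M)=N$), and $\widehat M$ is an increasing union of compact pieces, namely the preimages under the induced map $\widehat M\to\widehat\Sigma$ of a compact exhaustion of the $\Z$-cover $\widehat\Sigma$ of $\Sigma$.

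For $\delta_{\Gamma_n}\to 2m$ I would argue through the bottom of the $L^2$-spectrum. Since $\Gamma_0$ is a cocompact lattice (hence of divergence type) and $\Gamma_0/N\cong\Z$ is amenable, the Brooks--Roblin theorem gives $\delta_N=\delta_{\Gamma_0}=2m$; equivalently, via the Sullivan-type identity $\lambda_0(\H_\C^m/\Gamma)=\delta_\Gamma(2m-\delta_\Gamma)$ for $\delta_\Gamma\ge m$ (and $=m^2$ for $\delta_\Gamma\le m$), $\lambda_0(\widehat M)=0$. Choose $g_\ell\in C_c^\infty(\widehat M)$ with Rayleigh quotients tending to $0$. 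The support of $g_\ell$ lies in a compact connected submanifold of $\widehat M$ whose fundamental group, being finitely generated, maps into $\Gamma_n$ for all sufficiently large $n$ (as $\Gamma_n$ increases to $N$); for such $n$ the covering $M_n\to\widehat M$ is trivial over that submanifold, so $g_\ell$ descends to a compactly supported test function on $M_n$ with the same Rayleigh quotient, whence $\lambda_0(M_n)\le\mathrm{Rayleigh}(g_\ell)$ for all large $n$. Since $\Gamma_n\subseteq\Gamma_{n+1}$ makes $M_n\to M_{n+1}$ a covering, $\lambda_0(M_n)$ is non-increasing in $n$ and bounded below by $\lambda_0(\widehat M)=0$; hence $\lambda_0(M_n)\to 0$, and the Sullivan identity forces $\delta_{\Gamma_n}\to 2m$.

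The main difficulty is the strict inequality $\delta_{\Gamma_n}<2m$: a finitely generated infinite-index subgroup of a lattice in $\PU(m,1)$ could a priori be ``fibre-like'' with maximal critical exponent, so strictness is not automatic, and this is the place where the choice of exhaustion and the structure of surface groups intervene. I would choose each $S_n$ inside the kernel of a surjection from a finite-index subgroup of $\pi_1(\Sigma)$ onto a non-amenable group. Since $S_n$ is a finitely generated infinite-index subgroup of the cocompact surface group $\pi_1(\Sigma)$, Scott's subgroup-separability theorem lets us pass to a finite cover $\Sigma'_n\to\Sigma$ in which $S_n=\pi_1(R_n)$ for an embedded incompressible compact subsurface $R_n\subsetneq\Sigma'_n$; enlarging the cover we may assume the complement caps off to a closed surface of genus $\ge 2$, so that killing $\pi_1(R_n)$ defines a surjection $\pi_1(\Sigma'_n)\twoheadrightarrow Q_n$ onto a non-elementary (hence non-amenable) surface group with $S_n\le H'_n:=\ker\bigl(\pi_1(\Sigma'_n)\to Q_n\bigr)$. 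Then $\Gamma_n\le H_n:=f_*^{-1}(H'_n)$, and $H_n$ is a proper normal subgroup of the cocompact sublattice $\Gamma'_n:=f_*^{-1}(\pi_1(\Sigma'_n))$ with non-amenable quotient $Q_n$, so Brooks's theorem gives $\lambda_0(\H_\C^m/H_n)>\lambda_0(\H_\C^m/\Gamma'_n)=0$, i.e.\ $\delta_{\Gamma_n}\le\delta_{H_n}<2m$. (Alternatively, the $\Gamma_0$-set $\Gamma_n\backslash\Gamma_0\cong S_n\backslash\pi_1(\Sigma)$ is non-amenable, since a finitely generated infinite-index subgroup of a cocompact Fuchsian group is not co-amenable, and one may invoke the coset-space refinement of Brooks's amenability criterion.) Combined with the previous step, $(\Gamma_n)$ is the required sequence; apart from this last point, the argument only assembles standard results.
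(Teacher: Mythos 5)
Your argument is essentially correct and rests on the same spectral-geometric mechanism as the paper: the Elstrodt--Patterson--Sullivan--Corlette identity converts both assertions into statements about $\lambda_0(\H_\C^m/\Gamma_n)$. But the packaging is genuinely different. The paper takes $\Gamma_n=\psi_*^{-1}(\pi_1(\hat S_n))$ for an explicit compact subsurface $\hat S_n$ obtained by gluing $n$ copies of $S$ cut along a non-separating geodesic; it gets $\lambda_0(M_n)\to 0$ from a direct Cheeger-constant computation (volume $n\,{\rm vol}\,M$ against fixed boundary area) plus Buser's inequality, and gets $\lambda_0(M_n)>0$ from the quasi-isometry $M_n\to S_n$ (fibers of uniformly bounded diameter) together with $\delta_{\pi_1(S_n)}<1$ for the infinite-area convex-cocompact surface $S_n$. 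You instead exhaust $\ker(\Gamma_0\to\Z)$ by preimages of finitely generated subgroups $S_n\nearrow\ker(\pi_1(\Sigma)\to\Z)$, get $\lambda_0\to 0$ from Brooks's theorem for the amenable $\Z$-cover plus monotone convergence of $\lambda_0$ along the increasing exhaustion, and get $\lambda_0>0$ from non-amenability of the Schreier coset graph. Since the paper's $\pi_1(\hat S_n)$ are themselves finitely generated subgroups of $\ker(\pi_1(S)\to\Z)$, your construction is a legitimate abstraction of theirs: more flexible (any exhaustion works), at the cost of heavier black boxes. For strictness, your parenthetical alternative (non-amenability of $S_n\backslash\pi_1(\Sigma)$ plus the coset-space form of Brooks's criterion) is the cleaner route and is essentially the paper's Lemma \ref{lem:two}; the Scott-subsurface argument also works, but the step ``enlarging the cover we may assume the complement caps off to a closed surface of genus $\ge 2$'' needs justification (pass to a deeper finite cover still containing $S_n$, so the relevant elevation of $R_n$ remains a single homeomorphic copy while the Euler characteristic of its complement decreases without bound).

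The one genuine gap is the finite generation of $K=\ker(f_*\colon\Gamma_0\to\pi_1(\Sigma))$. The exact sequence you invoke reads $\pi_1(F)\to\pi_1(M)\to\pi_1^{\rm orb}(f)\to 1$, with the \emph{orbifold} fundamental group on the right, not $\pi_1(\Sigma)$. If $f$ has multiple fibers, $\ker(\pi_1(M)\to\pi_1(\Sigma))$ strictly contains the image of $\pi_1(F)$ and is typically infinitely generated (it surjects onto the normal closure of the torsion in $\pi_1^{\rm orb}(f)$). This is precisely what the paper's Remark \ref{rem:fg} addresses: pass to a finite-index torsion-free subgroup of $\pi_1^{\rm orb}(f)$ and replace $M$ by the corresponding finite cover, after which (by Catanese) the quotient map is again induced by a holomorphic fibration and the kernel equals the finitely generated image of $\pi_1(F)$. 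Your proof needs this correction before you may claim each $\Gamma_n$ is finitely generated; with it, the rest of your argument goes through.
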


A consequence of this result is that Corlette's gap theorem stated above has no analog in the complex hyperbolic geometry, at least in dimensions $2$ and $3$.
In this regard, we highlight a related conjecture of Yue \cite{Yue}, which was inspired by Corlette's gap theorem: 

\begin{conjecture}[Yue]
Suppose that $\Ga$ is a convex cocompact isometry group of $\H_{\C}^{m}$. Then $\Ga$ is either a uniform lattice (thus, $\delta_{\Ga}=2m$), or $\delta_{\Ga}\leq 2m-1$. 
\end{conjecture}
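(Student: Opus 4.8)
The plan is to convert Yue's conjecture into a purely metric statement about limit sets in the Carnot--Carath\'eodory (CC) geometry of the boundary, and then to attempt a Hausdorff-dimension gap there \emph{without} Property (T), which is unavailable for $\PU(m,1)$. First I would record the two standard inputs of rank-one Patterson--Sullivan theory for a convex cocompact $\Ga<\PU(m,1)$: (i) the critical exponent equals the CC-Hausdorff dimension of the limit set, $\delta_\Ga=\dim_{CC}\La_\Ga$ (the Bishop--Jones theorem together with the existence and Ahlfors $\delta_\Ga$-regularity of a $\Ga$-invariant conformal density supported on $\La_\Ga$, the CC metric being the one for which the full boundary $\partial\H_\C^m$ has dimension $2m$); and (ii) $\Ga$ is a uniform lattice exactly when $\La_\Ga=\partial\H_\C^m$. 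Because the $\Ga$-action on $\La_\Ga$ is minimal, a limit set with nonempty CC-interior is dense and closed, hence equals $\partial\H_\C^m$; thus $\Ga$ fails to be a lattice precisely when $\La_\Ga$ has empty interior. The conjecture is therefore equivalent to the assertion that a limit set with empty CC-interior satisfies $\dim_{CC}\La_\Ga\le 2m-1$.

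The value $2m-1$ is the geometric key and dictates the strategy. Identifying $\partial\H_\C^m$ with the one-point compactification of the Heisenberg group $\mathfrak{H}^{2m-1}$, the full boundary has homogeneous (CC) dimension $2m$, one more than its topological dimension $2m-1$, the surplus coming from the contact-transverse vertical direction, which the CC metric counts twice. A smooth real hypersurface in $\mathfrak{H}^{2m-1}$, being transverse to the contact distribution off its characteristic set, already realizes CC-dimension $2m-1$; Yue's threshold is thus exactly the dimension of a hypersurface, and the conjecture predicts that a convex cocompact limit set can be no thicker than a hypersurface unless it is everything. I would try to prove this through a dichotomy for the Ahlfors-regular density $\mu$ on $\La_\Ga$: either the vertical direction contributes its full extra unit on a set of positive $\mu$-measure---in which case $\La_\Ga$ should acquire CC-interior and hence, by minimality, fill $\partial\H_\C^m$---or the vertical contribution is everywhere suppressed, and one bounds $\dim_{CC}\La_\Ga$ by controlling the dimension under the horizontal projection $\mathfrak{H}^{2m-1}\to\C^{m-1}$, capping it at $2m-1$. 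The analytic engine I would reach for is the recent theory of horizontal and vertical projections in the Heisenberg group (in the spirit of Balogh--Tyson--Warhurst and F\"assler--Orponen), applied to $\mu$, with the aim of turning a hypothetical $\dim_{CC}\La_\Ga\in(2m-1,2m)$ into a forced nonempty interior.

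The hard part, and the reason the conjecture remains open, is exactly this last implication, which has no real-hyperbolic or soft analog. In real hyperbolic space there is \emph{no} gap---Sullivan's convex cocompact examples exhaust the interval up to the maximal dimension---so no spectral or representation-theoretic mechanism can be responsible; in particular Corlette's route through Kazhdan's Property (T) and a uniform spectral gap for $\lambda_0$ is genuinely closed off, since $\PU(m,1)$ is flexible. Worse, Ahlfors regularity alone cannot force interior: self-similar sets of any dimension below $2m$ with empty interior exist in $\mathfrak{H}^{2m-1}$, so the decisive step must use that $\La_\Ga$ is not an arbitrary self-similar set but is invariant under a \emph{group} of complex-hyperbolic (hence Heisenberg--M\"obius) transformations, and must exploit complex-hyperbolic rigidity and the underlying K\"ahler structure to rule out conformal densities of dimension strictly between $2m-1$ and $2m$ on a proper limit set.

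Finally, it is worth emphasizing why the convex cocompactness hypothesis is indispensable and why the groups of Theorem~\ref{mainthm} do not threaten the conjecture. Those groups realize critical exponents arbitrarily close to $2m$ from below, yet they are geometrically infinite---arising from the holomorphic fibration and containing the normal surface-group fiber, whose limit set is already all of $\partial\H_\C^m$---so $\La_{\Ga_n}=\partial\H_\C^m$ while $\delta_{\Ga_n}<2m=\dim_{CC}\La_{\Ga_n}$. The equality $\delta_\Ga=\dim_{CC}\La_\Ga$ that makes the reduction above possible holds only under convex cocompactness, and the conjectural gap is ultimately a statement about \emph{proper} limit sets, not about critical exponents directly: the whole difficulty is to show that in the sub-Riemannian boundary a limit set cannot exceed the dimension of a hypersurface without becoming the entire boundary.
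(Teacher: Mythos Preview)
The statement you are attempting to prove is Yue's \emph{conjecture}; the paper does not prove it and does not claim to. It is quoted only to situate Theorem~A and to explain why the geometrically infinite groups $\Ga_n$ constructed there do not settle it. There is therefore no ``paper's own proof'' to compare against.

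Your proposal is not a proof either, and you say so yourself. The reduction in your first paragraph---$\delta_\Ga=\dim_{CC}\La_\Ga$ for convex cocompact $\Ga$, minimality of the $\Ga$-action on $\La_\Ga$, and the equivalence of ``not a lattice'' with ``$\La_\Ga$ has empty CC-interior''---is correct and standard, and your identification of $2m-1$ with the CC-dimension of a generic real hypersurface in the Heisenberg boundary is the right heuristic. But the entire content of the conjecture lies in the implication you flag as ``the hard part'': that an Ahlfors $\delta$-regular, $\Ga$-invariant subset of the Heisenberg boundary with $\delta\in(2m-1,2m)$ must have nonempty interior. You correctly observe that Ahlfors regularity alone cannot force this (self-similar counterexamples abound), that Property~(T) is unavailable, and that some genuinely complex-hyperbolic input is needed; but you do not supply that input. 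Invoking Heisenberg projection theorems ``in the spirit of'' Balogh--Tyson--Warhurst or F\"assler--Orponen is a plausible direction, not an argument: those results give dimension bounds for projections of general sets and do not, as far as is known, yield the interior-forcing dichotomy you need even under group invariance. What you have written is a well-informed research outline for an open problem, not a proof.
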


However, Theorem \ref{mainthm} does not disprove this conjecture, as the discrete groups $\Ga_n$  we construct here are all geometrically infinite.
See Remark \ref{final_remark}.

\subsection*{Acknowledgement} We  would like to thank Igor Belegradek,  Christopher J. Bishop,  John Etnyre, and Shi Wang for helpful discussions. We are especially grateful to Misha Kapovich for suggesting us to study Livne's example and  sharing his ideas with us. 
We thank our referees for helping us improve the exposition and suggesting 
Deraux's article which allows us to include the case $m=3$ in our main result.
The second author is partially supported by the NSF grant DMS-2203237. 

\section{Preliminaries}
Let $\B^m$ denote the unit ball in the $m$-dimensional complex vector space $\C^m$.
The group of biholomorphic automorphisms of $\B^m$ is naturally identified with the projective unitary group $\PU(m,1)$.
It carries a natural $\PU(m,1)$-invariant Hermitian metric of constant holomorphic sectional curvature, called the {\em Bergman metric}.
The ball $\B^m$ equipped with the real part of the Bergman metric is called the $n$-dimensional complex hyperbolic space, denoted by $\H_\C^m$.
Although $\H_\C^m$ is a negatively-curved space, unlike the real hyperbolic case, the  real sectional curvature is variable.
Under a suitable normalization, which we use throughout this article,
the real sectional curvature varies between $-4$ and $-1$.
We refer to Goldman's book \cite{MR1695450} and Kapovich's survey \cite{kapovich2019lectures} for a detailed discussion of complex hyperbolic geometry.
Furthermore, Kapovich's survey \cite{kapovich2019lectures} contains a collection of important open questions in this field.

Discrete subgroups $\Ga$ of $\PU(m,1)$ act properly discontinuously on $\H_\C^m$.
To each discrete subgroup $\Ga$, one associates a numerical invariant, called the {\em critical exponent}, as follows: Fixing a base point $x_0\in\H_\C^m$, consider the Poincar\'e series
\[
 P(s) = \sum_{\ga\in\Ga} \exp( -s\, d_{\H_\C^m}(x_0,\ga x_0)).
\]
The {\em critical exponent} of $\Ga$, denoted by $\delta_\Ga$, is
\[
\delta_\Ga = \inf\{s\in [0,\infty) : P(s) \text{ converges} \}.
\]
A fundamental fact is that the number $\delta_\Ga$ defined above is independent of the choice of the base point $x_0 \in \H_\C^m$. For details, see for example \cite[Section 7]{kapovich2019lectures}.

The critical exponent is deeply connected with other invariants associated with $\Ga$: for instance,  Corlette \cite{Corlette2} showed that the critical exponent of a  {\em geometrically finite} group $\Ga$ equals  the Hausdorff dimension of limit set of $\Ga$ equipped with  the restriction of certain natural sub-Riemannian  metric (called a {\em Carnot metric}) on the visual boundary of $\H^m_\C$.
More generally, Corlette--Iozzi \cite{MR1458321} showed that for general discrete groups $\Ga$, $\delta_\Ga$ coincides with the Hausdorff dimension (w.r.t. the Carnot metric) of the {\em conical} part of the limit set of $\Ga$.

Another invariant that the critical exponent has deep connection with, which plays a central role in this article, is the spectrum of the {\em Laplace-Beltrami operator}. For a discrete subgroup $\Ga < \PU(m,1)$, let $M_\Ga \coloneqq \H_\C^m/\Ga$, and let $\lambda_0(M_\Ga)$ denote the infimum of the spectrum of the Laplace-Beltrami operator $\Delta_{M_\Ga}$ on $L^2(M_\Ga)$.

\begin{theorem}[Elstrodt-Patterson-Sullivan-Corlette; see {\cite[Theorem 4.2]{Corlette2}}]
Let $\Ga < G = \PU(m,1)$ be a discrete group.
 Then
 \[
 \lambda_0(M_\Ga) = 
 \begin{cases} 
      m^2 & \text{if }\delta_\Ga\in [0,m], \\
      \delta_\Ga(2m - \delta_\Ga) & \text{if }\delta_\Ga\in [m,2m].
 \end{cases}
 \]
\end{theorem}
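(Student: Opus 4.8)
The plan is to derive the formula from two classical ingredients: the identification, due to Patterson and Sullivan, of $\lambda_0(M_\Gamma)$ with the abscissa of convergence of a $\Gamma$-averaged resolvent kernel of $\H_\C^m$, and the sharp exponential decay rate of that kernel. First I would record the easy half. Since $\H_\C^m\to M_\Gamma$ is a Riemannian covering and a positive $\lambda$-eigenfunction on $M_\Gamma$ lifts to one on $\H_\C^m$, the generalized principal eigenvalue characterization $\lambda_0(N)=\sup\{\lambda\ge 0:\Delta_N-\lambda\text{ has a positive solution}\}$ (Fischer-Colbrie--Schoen) yields $\lambda_0(M_\Gamma)\le\lambda_0(\H_\C^m)$; and $\lambda_0(\H_\C^m)=m^2$, since the $L^2$-spectrum of $\Delta$ on the rank-one symmetric space $\H_\C^m$ is $[m^2,\infty)$ (equivalently, because the volume entropy of $\H_\C^m$ in our curvature normalization is $2m$). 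This already establishes $\lambda_0(M_\Gamma)\le m^2$, which is the asserted value when $\delta_\Gamma\in[0,m]$ and is one of the two inequalities needed when $\delta_\Gamma\in(m,2m]$.

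For $\lambda<m^2$ let $G_\lambda$ denote the minimal positive Green's function of $\Delta_{\H_\C^m}-\lambda$, which exists precisely because $\lambda<\lambda_0(\H_\C^m)$. The first key input is the asymptotic
\[
  G_\lambda(x,y)\ \asymp\ P_\lambda\!\big(d_{\H_\C^m}(x,y)\big)\,e^{-s(\lambda)\,d_{\H_\C^m}(x,y)}\qquad\text{as }d_{\H_\C^m}(x,y)\to\infty,
\]
where $P_\lambda$ is a fixed sub-exponential (polynomial) factor and $s(\lambda)=m+\sqrt{m^2-\lambda}\in[m,2m]$ is the larger root of $s(2m-s)=\lambda$; this follows from the explicit hypergeometric form of the resolvent kernel on a rank-one symmetric space, or from Harish-Chandra's expansion of spherical functions. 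The second key input is Sullivan's \emph{method of images}: on the complete manifold $M_\Gamma$ the series $\sum_{\gamma\in\Gamma}G_\lambda(x,\gamma y)$, when finite, is the minimal positive Green's function of $\Delta_{M_\Gamma}-\lambda$; and $\Delta_{M_\Gamma}-\lambda$ admits a positive Green's function whenever $\lambda<\lambda_0(M_\Gamma)$, while (by the $\lambda$-parabolic/$\lambda$-nonparabolic dichotomy on complete manifolds, $\lambda$-parabolicity forcing $\lambda\ge\lambda_0$) it admits none once $\lambda>\lambda_0(M_\Gamma)$. Hence $\lambda_0(M_\Gamma)$ equals the supremum of those $\lambda<m^2$ for which $\sum_{\gamma}G_\lambda(x_0,\gamma x_0)<\infty$.

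Now combine these with the defining property of $\delta_\Gamma$: the Poincaré series $\sum_\gamma e^{-s\,d(x_0,\gamma x_0)}$ converges for $s>\delta_\Gamma$ and diverges for $s<\delta_\Gamma$, and the sub-exponential factor $P_\lambda$ does not move this abscissa, so
\[
  \sum_{\gamma\in\Gamma}G_\lambda(x_0,\gamma x_0)<\infty\iff s(\lambda)>\delta_\Gamma\iff m+\sqrt{m^2-\lambda}>\delta_\Gamma .
\]
If $\delta_\Gamma\le m$ the right-hand condition holds for every $\lambda<m^2$, so $\lambda_0(M_\Gamma)\ge m^2$, hence $=m^2$. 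If $\delta_\Gamma\in(m,2m]$, then $m+\sqrt{m^2-\lambda}>\delta_\Gamma$ is equivalent to $\lambda<m^2-(\delta_\Gamma-m)^2=\delta_\Gamma(2m-\delta_\Gamma)$, whence $\lambda_0(M_\Gamma)=\delta_\Gamma(2m-\delta_\Gamma)$; the boundary cases $\lambda=\lambda_0(M_\Gamma)$ and $s(\lambda)=\delta_\Gamma$ are irrelevant, as only suprema of open conditions intervene. (As a more hands-on route to the lower bound $\lambda_0(M_\Gamma)\ge\delta_\Gamma(2m-\delta_\Gamma)$ when $\delta_\Gamma\ge m$: a $\Gamma$-conformal density $\mu_{x_0}$ of dimension $\delta_\Gamma$ on the limit set produces the $\Gamma$-invariant positive function $\phi(x)=\int e^{-\delta_\Gamma\,\beta_\xi(x,x_0)}\,d\mu_{x_0}(\xi)$, and since Busemann functions on $\H_\C^m$ satisfy $\Delta\beta_\xi\equiv 2m$, this $\phi$ solves $\Delta\phi=\delta_\Gamma(2m-\delta_\Gamma)\phi$ and descends to $M_\Gamma$.)

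I expect the main obstacle to be making the method-of-images step fully rigorous: one needs that divergence of the orbital sum genuinely obstructs positivity of $\Delta_{M_\Gamma}-\lambda$, which rests on the potential theory of positive solutions on complete manifolds ($\lambda$-parabolicity, minimality and uniqueness of Green's functions, and the comparison $\lambda$-parabolic $\Rightarrow\lambda\ge\lambda_0$). A secondary technical point is that one needs the \emph{sharp} exponential rate $s(\lambda)$ for $G_\lambda$ and not merely a coarse bound such as $e^{-2m\,d}\lesssim G_\lambda\lesssim 1$; this is exactly where the rank-one structure of $\H_\C^m$ and the resulting explicit spherical analysis are essential.
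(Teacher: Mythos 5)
Your argument is correct and is essentially the proof the paper points to: the paper does not prove this theorem itself but cites Corlette \cite{Corlette2}, whose argument (building on Sullivan \cite{Sullivan}) is exactly your route --- the identity $\lambda_0(\H_\C^m)=m^2$, the sharp decay rate $s(\lambda)=m+\sqrt{m^2-\lambda}$ of the resolvent kernel on the rank-one symmetric space, Sullivan's method of images relating convergence of $\sum_\gamma G_\lambda(x_0,\gamma x_0)$ to positivity of $\Delta_{M_\Gamma}-\lambda$, and the Patterson--Sullivan density giving the positive $\delta_\Ga(2m-\delta_\Ga)$-eigenfunction via $\Delta\beta_\xi\equiv 2m$. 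No substantive discrepancies to report.
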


A direct consequence, which is useful for our purpose, is the following:

\begin{corollary}\label{cor}
 Let $(\Ga_n)$ be a sequence of discrete subgroups of $\PU(m,1)$, and let $M_n \coloneqq \H_\C^m/\Ga$. If $\lambda_0(M_{n})> 0$ and $\lim_{n\ra\infty} \lambda_0(M_{n}) = 0$, then $\delta_{\Ga_n} <2m$ and
$
\lim_{n\ra\infty} \delta_{\Ga_n} = 2m.
$
\end{corollary}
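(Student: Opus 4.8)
The plan is to read everything off directly from the Elstrodt--Patterson--Sullivan--Corlette theorem stated above; no further geometric input is needed. Throughout, write $\lambda_n \coloneqq \lambda_0(M_n)$ and $\delta_n \coloneqq \delta_{\Ga_n}$, and recall the general bound $\delta_n \in [0,2m]$.

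First I would establish (i). If $\delta_n \le m$, then certainly $\delta_n < 2m$. If instead $\delta_n \in [m,2m]$, then the theorem gives $\lambda_n = \delta_n(2m-\delta_n)$; since $\lambda_n > 0$ by hypothesis, the factor $2m - \delta_n$ cannot vanish, whence $\delta_n < 2m$. Either way $\delta_n < 2m$ for every $n$.

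For (ii), I would first use $\lambda_n \to 0$ to eliminate the branch $\delta_n \le m$ for large $n$: since $m^2 > 0$, there is $N$ with $\lambda_n < m^2$ for all $n \ge N$, and for such $n$ the theorem forces $\delta_n \in [m,2m]$ and $\lambda_n = \delta_n(2m - \delta_n)$. Setting $t_n \coloneqq 2m - \delta_n \in [0,m]$, one has $\lambda_n = (2m - t_n)\,t_n \ge m\,t_n$ because $2m - t_n \ge m$, hence $0 \le t_n \le \lambda_n/m$. Letting $n \to \infty$ gives $t_n \to 0$, i.e. $\delta_n \to 2m$.

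The only point requiring any care is the case distinction in the theorem: one must check that the ``nontrivial'' branch $\delta_n(2m-\delta_n)$ is the operative one, which is exactly where the two hypotheses enter ($\lambda_n > 0$ to force $\delta_n < 2m$, and $\lambda_n \to 0$ to eventually land in $[m,2m]$ rather than on the constant branch). After that it is just the elementary estimate inverting the strictly decreasing function $\delta \mapsto \delta(2m-\delta)$ on $[m,2m]$, so there is essentially no obstacle.
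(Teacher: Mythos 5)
Your proof is correct and is exactly the argument the paper has in mind: the paper states the corollary as a ``direct consequence'' of the Elstrodt--Patterson--Sullivan--Corlette theorem without writing out the details, and your case analysis (using $\lambda_n>0$ to rule out $\delta_n=2m$, and $\lambda_n<m^2$ eventually to land on the branch $\lambda_n=\delta_n(2m-\delta_n)$, then inverting) is precisely the omitted verification.
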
 

\section{Main construction}\label{sec:construction}

In this section, we present a general construction of $m$-dimensional complex hyperbolic Kleinian groups whose critical exponents approach to  arbitrarily close to $2m$, the maximum possible value. However, to start the construction, one needs a compact  complex hyperbolic $m$-manifold $M$ admitting a non-constant holomorphic map 
$
  \psi: M \ra S
$
to a compact hyperbolic Riemann surface $S$.

Let $m\ge 2$ and suppose that we are given  such an example $\psi: M \ra S$.
We observe that the image of the induced homomorphism $\psi_*: \pi_1(M)\to \pi_1(S)$ is a finite index subgroup of $\pi_1(S)$; hence the image $\psi_*(\pi_1(M))$ is the fundamental group of a closed real hyperbolic surface (i.e., a {\em surface group}):
Indeed, if, to the contrary, $\psi_*(\pi_1(M))$  had an infinite index in $\pi_1(S)$, then we could lift $\psi: M \ra S$ to a non-constant holomorphic map $M \to S'$, where $S' \to S$ is the infinite degree covering space corresponding to the infinite index subgroup $\psi_*( \pi_1(M))< \pi_1(S)$. However, since $M$ is compact and $S'$ is noncompact, such non-constant holomorphic maps cannot exist.

Since $\pi_1(M)$ admits an epimorphism to a surface group, by Siu--Beauville theorem (see \cite[Theorem 2.11]{amoros1996fundamental}), $M$ admits a surjective holomorphic map with connected fibers to a hyperbolic Riemann surface, which we again denote by 
\begin{equation}\label{eqn:main_example}
  \psi: M \ra S.
\end{equation}
In particular, we obtain a short exact sequence
\begin{equation}\label{eqn:SES}
 1 \to K \to \pi_1(M) \xrightarrow{\psi_*} \pi_1(S) \to 1,
\end{equation}
where $K$ is the kernel of $\psi_*$.
By definition, $\pi_1(M)$ is realized as a cocompact lattice in $\PU(m,1)$.
Then, $K$, as a subgroup of $\pi_1(M)$, also embeds in $\PU(m,1)$ as a discrete subgroup.
Since $K$ is an {\em infinite} normal subgroup of $\pi_1(M)$, 
\begin{equation}\label{eqn:dK}
 \Lambda(K) = \partial\H^m_\C.
\end{equation}
(Here and in what follows, for $\Ga < \PU(m,1)$, $\Lambda(\Ga)$ denotes the {\em limit set} of $\Ga$.)
Let
$\tilde M$
denote the regular covering of $M$ corresponding to the normal subgroup $K< \pi_1(M)$.
Then \eqref{eqn:main_example} induces a $\pi_1(S)$-equivariant surjective  holomorphic map
\begin{equation}\label{eqn:K}
 \tilde\psi: \tilde M \to \H^2_\R
\end{equation}
with connected fibers.
We observe that there is a uniform upper-bound for the diameter of the fibers of $\tilde\psi$.

We now pick a non-separating simple closed geodesic $c$ in $S$.
Cutting $S$ along $c$, we obtain a connected hyperbolic $\hat S$ surface with geodesic boundary $\partial \hat S$. Note that $\partial \hat S$ has two connected components $c_-$ and $c_+$ and there is a natural identification $f: c_- \to c_+$.
Let
\[
 \hat S_n \coloneqq \underbrace{\hat S \sqcup_f \hat S \sqcup_f \cdots \sqcup_f \hat S}_{n-\text{times}},
\]
where the symbol $\sqcup_f$ above is used to mean that two consecutive copies of $\hat S$ appearing on the left
and right of the symbol $\sqcup_f$ are glued along their boundaries $c_-$ (for the left one) and $c_+$ (for the right one) by the homeomorphism $f$.
Also,  $\hat S_n$ is a compact hyperbolic surface with geodesic boundary.
The natural map $\hat S_n \to S$ induces an injective homomorphism $\pi_1(\hat S_n) \hookrightarrow \pi_1(S)$; we will identify $\pi_1(\hat S_n)$ as a subgroup of $\pi_1(S)$ under this homomorphism.
The universal cover of $\hat S_n$ is isometric to a convex domain $D_n\subset\H^2_\R$ with geodesic boundary; the fundamental group $\pi_1(\hat S_n)$, which is the deck-transformation group of the covering $D_n\to \hat S_n$, acts on $D_n$ as a subgroup of $\pi_1(S)$.
Taking quotients by $\pi_1(\hat S_n)$, \eqref{eqn:K} induces a surjective holomorphic map
\[
 \psi_n: M_n \to S_n,
\]
where 
\begin{equation}\label{eqn:Mn}
 M_n = \tilde M/\pi_1(\hat S_n) \quad\text{and}\quad 
 S_n = \H^2_\R/\pi_1(\hat S_n).
\end{equation}

\begin{lemma}\label{lem:qi}
 The map $\psi_n: M_n \to S_n$ is a quasi-isometry.
\end{lemma}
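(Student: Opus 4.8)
The plan is to derive the lemma from one fact that involves no $n$: that the map $\tilde\psi\colon\tilde M\to\H^2_\R$ of \eqref{eqn:K} is itself a quasi-isometry. Granting this, $\psi_n$ is obtained from $\tilde\psi$ by passing to the quotient by $\pi_1(\hat S_n)<\pi_1(S)$, and quotienting a $\Gamma$-equivariant quasi-isometry by a free, properly discontinuous, isometric $\Gamma$-action leaves the quasi-isometry constants unchanged; this gives the lemma at once, in fact with constants independent of $n$.

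To see that $\tilde\psi$ is a quasi-isometry I would argue as follows. It is the lift of $\psi$ through the Riemannian coverings $\tilde M\to M$ and $\H^2_\R\to S$, so --- since $\psi$ is Lipschitz (being smooth on the compact $M$) and these coverings are local isometries --- $\tilde\psi$ is $L$-Lipschitz, with $L$ the Lipschitz constant of $\psi$. Next, $\tilde M\to M$ is a regular covering with deck group $\pi_1(M)/K\cong\pi_1(S)$, so $\pi_1(S)$ acts on $\tilde M$ freely, properly discontinuously, and cocompactly by isometries, and the same holds for the $\pi_1(S)$-action on $\H^2_\R$ as deck group of $\H^2_\R\to S$. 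Fixing $x_0\in\tilde M$ and setting $s_0\coloneqq\tilde\psi(x_0)$, the Milnor--Svarc lemma makes the orbit maps $g\mapsto gx_0$ and $g\mapsto gs_0$ quasi-isometries from $\pi_1(S)$, with a word metric, onto $\tilde M$ and $\H^2_\R$ respectively; since $\tilde\psi$ is $\pi_1(S)$-equivariant, $\tilde\psi(gx_0)=gs_0$, so on the orbit $\pi_1(S)x_0$ the map $\tilde\psi$ is the composite of one orbit quasi-isometry (inverted) with the other, hence a quasi-isometry onto $\pi_1(S)s_0$. Since these orbits are coarsely dense and $\tilde\psi$ is $L$-Lipschitz and surjective, a routine extension argument then shows that $\tilde\psi$ itself is a quasi-isometry. (The uniform bound on the diameters of the fibers of $\tilde\psi$ supplies a second route, by producing a coarse section of $\tilde\psi$.)

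For the descent step I would use that $\pi_1(\hat S_n)$ is torsion-free (a subgroup of a surface group) and hence acts freely and properly discontinuously by isometries on both $\tilde M$ and $\H^2_\R$; then $M_n$ and $S_n$ of \eqref{eqn:Mn} carry the quotient length metrics, for which $d_{M_n}(\bar x,\bar y)=\inf_{\gamma\in\pi_1(\hat S_n)}d_{\tilde M}(x,\gamma y)$ and similarly on $S_n$. As $\tilde\psi$ is $\pi_1(S)$-equivariant it is $\pi_1(\hat S_n)$-equivariant, and $\psi_n$ is the map it induces on quotients. If $\tilde\psi$ is a $(\lambda,\epsilon)$-quasi-isometry, then by equivariance, for all $x,y\in\tilde M$,
\[
 d_{S_n}(\psi_n\bar x,\psi_n\bar y)=\inf_{\gamma\in\pi_1(\hat S_n)}d_{\H^2_\R}\bigl(\tilde\psi x,\tilde\psi(\gamma y)\bigr),
\]
and inserting the two defining inequalities of $\tilde\psi$ into this infimum yields $\tfrac1\lambda\, d_{M_n}(\bar x,\bar y)-\epsilon\le d_{S_n}(\psi_n\bar x,\psi_n\bar y)\le\lambda\, d_{M_n}(\bar x,\bar y)+\epsilon$, while surjectivity of $\psi_n$ is inherited from $\tilde\psi$. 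So $\psi_n$ is a quasi-isometry with the same constants, uniformly in $n$. I do not expect a real obstacle: the substance lies entirely in the quasi-isometry of $\tilde\psi$, i.e.\ in recognizing the $\pi_1(S)$-cocompactness of $\tilde M$ (equivalently, in exploiting the compactness of $M$ and the uniform fiber-diameter bound); after that, the descent is purely formal, and the only thing needing attention is that it leaves the quasi-isometry constants untouched --- which is immediate from the infimum description of the quotient metrics.
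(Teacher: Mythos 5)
Your proof is correct, but it takes a genuinely different route from the paper's. The paper argues directly at the level of $\psi_n$: it notes that $\psi_n$ is $L$-Lipschitz with $L$ the Lipschitz constant of $\psi$, picks a set-theoretic section $\phi_n$ of $\psi_n$, and uses the uniform bound on the diameters of the fibers of $\tilde\psi$ (hence of $\psi_n$) to conclude that $\phi_n\circ\psi_n$ is within bounded distance of the identity, so that $\phi_n$ is a quasi-inverse; this is essentially the ``second route'' you mention only parenthetically. You instead prove the $n$-independent statement that $\tilde\psi\colon\tilde M\to\H^2_\R$ is a quasi-isometry, via the Milnor--\v{S}varc lemma applied to the geometric actions of $\pi_1(S)$ on $\tilde M$ and on $\H^2_\R$ together with equivariance, and then descend through the quotient by $\pi_1(\hat S_n)$ using the infimum description of the covering metrics; both steps are sound, and the descent does preserve the constants, so you also get uniformity in $n$. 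Your version is longer but has a real advantage: it explicitly establishes the lower (coarse expansion) bound $d_{S_n}(\psi_n\bar x,\psi_n\bar y)\ge\lambda^{-1}d_{M_n}(\bar x,\bar y)-\epsilon$, which the paper's quasi-inverse formalism leaves implicit --- a Lipschitz surjection with uniformly bounded fibers need not be a quasi-isometry in general (consider $x\mapsto\operatorname{sgn}(x)\log(1+|x|)$ on $\R$), so one must still verify that the section $\phi_n$ is coarse Lipschitz, and that verification is exactly what your Milnor--\v{S}varc step supplies. In the end both arguments rest on the same inputs: compactness of $M$ and $S$ and the $\pi_1(S)$-equivariance of $\tilde\psi$.
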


\begin{proof}
 Since $\psi:M\to S$ is $L$-Lipschitz, for some $L\ge 1$, it follows that $\psi_n: M_n \to S_n$ is also $L$-Lipschitz.
 So, it is enough to construct a quasi-inverse of $\psi_n$:
 Using the Axiom of Choice, we get a map $\phi_n: S_n \to M_n$ which sends any point $x\in S_n$ to some point in $\psi_n^{-1}(x)$. Clearly, $\psi_n\circ\phi_n = {\rm id}_{S_n}$.
 Moreover, since the diameter of $\psi_n^{-1}(x)$, $x\in S_n$, is uniformly bounded (cf. last line of the fourth  paragraph in this section), say by some number $D>0$, it follows that 
 \[
  d_{M_n}(y, \phi_n\circ\psi_n(y)) \le D,
 \]
 for all $y\in M_n$. Therefore, $\phi_n$ is a quasi-inverse of $\psi_n$.
\end{proof}

\begin{lemma}\label{lem:one}
 Let $h_n$ denote the Cheeger's isoperimetric  constant of $M_n$.
 Then, $h_n \ra 0$, as $n\ra\infty$.
 Consequently, $\lambda_0(M_n) \ra 0$, as $n\ra\infty$.
\end{lemma}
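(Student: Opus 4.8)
\emph{Strategy and set-up.} The plan is to exhibit, for each $n$, a precompact smooth domain $\Omega_n\subset M_n$ whose boundary has $(2m-1)$-volume bounded independently of $n$ but whose $2m$-volume grows linearly in $n$; this at once gives $h_n\to 0$, and then $\lambda_0(M_n)\to 0$ follows either from Buser's inequality (all $M_n$ have sectional curvature in $[-4,-1]$, hence a uniform lower Ricci bound) or from a direct test-function estimate. The natural candidate is $\Omega_n:=\psi_n^{-1}(\hat S_n)$, where $\hat S_n\subset S_n$ is the compact convex core, i.e.\ the union of the $n$ glued copies $\hat S^{(1)},\dots,\hat S^{(n)}$ of $\hat S$; it is compact because $\hat S_n$ is compact and the fibers of $\psi_n$ have uniformly bounded diameter (cf.\ \S\ref{sec:construction} and the proof of Lemma~\ref{lem:qi}). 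I would also record two auxiliary facts: (a) $M_n\to M$ is a Riemannian covering --- it corresponds to the subgroup $\psi_*^{-1}(\pi_1\hat S_n)<\pi_1 M$, cf.\ \eqref{eqn:SES} --- and passing to the coverings $M_n\to M$, $S_n\to S$ the map $\psi_n$ covers $\psi$, so the $M_n$ are all locally isometric to the fixed compact $M$; (b) we may take $c$ to avoid the finitely many critical values of $\psi$, so that $\psi^{-1}(c)$ --- and hence $\psi_n^{-1}$ of any circle lying over $c$ --- is a smooth compact hypersurface, which makes $\Omega_n$ a legitimate competitor for $h_n$.

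\emph{Lower bound on $\mathrm{vol}_{2m}(\Omega_n)$.} Every fiber of $\psi_n$ is a nonempty compact complex-analytic subset of pure complex dimension $m-1$ in the K\"ahler manifold $M_n$, so its $(2m-2)$-volume is bounded below by a constant $\epsilon_0>0$ depending only on the local K\"ahler geometry (hence only on $M$); this is standard via the Lelong/monotonicity estimate. Since $\psi_n$ is $L$-Lipschitz, its coarea Jacobian is $\le L^2$, so the coarea formula applied to $\psi_n$ on $\Omega_n$ gives
\[
 \mathrm{vol}_{2m}(\Omega_n)\ \ge\ \frac1{L^2}\int_{\hat S_n}\mathrm{vol}_{2m-2}\big(\psi_n^{-1}(w)\big)\,d\mathrm{vol}_{S_n}(w)\ \ge\ \frac{\epsilon_0}{L^2}\,\mathrm{area}(\hat S_n)\ =\ \frac{\epsilon_0}{L^2}\, n\,\mathrm{area}(S),
\]
since cutting and regluing along geodesics preserves area. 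Thus $\mathrm{vol}_{2m}(\Omega_n)\to\infty$ linearly.

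\emph{Upper bound on $\mathrm{vol}_{2m-1}(\partial\Omega_n)$.} Here $\partial\Omega_n=\psi_n^{-1}(\partial\hat S_n)$, and $\partial\hat S_n$ consists of exactly two circles $c_-^{(1)}$ and $c_+^{(n)}$, each mapped isometrically onto $c$ by $S_n\to S$; so $\partial\hat S_n$ has total length $2\,\ell(c)$ for \emph{every} $n$. Pushing forward by the Riemannian covering $M_n\to M$, the hypersurface $\partial\Omega_n$ maps onto the \emph{fixed} compact hypersurface $\psi^{-1}(c)$, with multiplicity at most $2d_0$: a point of $\psi^{-1}(c)$ has at most two preimages in $\partial\hat S_n$ (one per boundary circle), and over each such preimage the relevant $\psi_n$-fiber covers the relevant $\psi$-fiber with degree $\le d_0$, where $d_0<\infty$ is a uniform bound --- the $\psi_n$-fibers have uniformly bounded diameter, hence uniformly bounded $(2m-2)$-volume by Bishop--Gromov and the uniform curvature bound, while the $\psi$-fibers have volume $\ge\epsilon_0$. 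Consequently
\[
 \mathrm{vol}_{2m-1}(\partial\Omega_n)\ \le\ 2d_0\,\mathrm{vol}_{2m-1}\big(\psi^{-1}(c)\big)\ =:\ C_1\ <\ \infty,
\]
independently of $n$.

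\emph{Conclusion and main obstacle.} Combining the two estimates, $h_n\le C_1L^2/(\epsilon_0\,n\,\mathrm{area}(S))\to 0$. For the spectral statement, Buser's inequality (valid under the uniform lower Ricci bound) gives $\lambda_0(M_n)\le C(m)\,(h_n+h_n^2)\to 0$; alternatively, and self-containedly, test the Rayleigh quotient against $f_n:=\max(0,1-\mathrm{dist}(\cdot,\Omega_n))$, whose gradient is supported in the collar $N_1(\Omega_n)\setminus\Omega_n$: by $L$-Lipschitzness of $\psi_n$ this collar maps into two bounded pieces of the funnels of $S_n$, whose $\psi_n$-preimages have uniformly bounded volume by Lemma~\ref{lem:qi} and Bishop--Gromov, so $\int|\nabla f_n|^2\le C_2$ while $\int f_n^2\ge\mathrm{vol}_{2m}(\Omega_n)\to\infty$, whence $\lambda_0(M_n)\le C_2/\mathrm{vol}_{2m}(\Omega_n)\to 0$. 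I expect the only genuinely delicate point to be the boundary estimate: $\Omega_n$ has boundary that could a priori grow with $n$, and it is precisely the choice of $\Omega_n$ as the preimage of the convex core --- together with the $n$-independence of the length of $\partial\hat S_n$ and of the diameter of the fibers of $\psi_n$ --- that makes the multiplicity count above work; the linear volume lower bound and the passage from $h_n$ to $\lambda_0$ are routine.
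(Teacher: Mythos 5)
Your proposal is correct and follows essentially the same route as the paper: both test the Cheeger constant on the domain $\hat M_n=\psi_n^{-1}(\hat S_n)$ (your $\Omega_n$) and conclude via Buser's inequality. The paper gets the two estimates more directly from the covering structure: since $M_n\to M$ is the pullback of $S_n\to S$, the set $\hat M_n$ is tiled by $n$ isometric copies of $M$ cut along $\psi^{-1}(c)$, so ${\rm vol}(\hat M_n)=n\,{\rm vol}(M)$ exactly, and each of the two boundary components is \emph{isometric} to $\psi^{-1}(c)$ (the relevant covering degree is $1$, not merely $\le d_0$); this replaces your coarea/Lelong lower bound and your multiplicity count. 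One caveat on your boundary estimate: deducing a uniform bound on the $(2m-1)$- or $(2m-2)$-volume of a fiber from its bounded diameter ``by Bishop--Gromov'' is not a correct application of that theorem (it controls volumes of metric balls, not areas of subvarieties of bounded diameter); the multiplicity bound should instead be read off from the sheet correspondence of the covering $M_n\to M$, where it is in fact exact.
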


\begin{proof}
Let $\hat M_n$ denote the preimage of $\hat S_n \subset S_n$ under $\psi_n$.
Note that the volume of $\hat M_n$ equals  $n \,{\rm vol}\,M$.
Moreover, the boundary of the $M_n$, which is the preimage of $\partial \hat S_n$ under $\psi_n$, has two connected components, each one is isometrically identified with $\psi^{-1}(c)$.
Therefore,
 \[
  h_n \le \frac{{\rm area}\, \partial \hat M_n}{{\rm vol}\, \hat M_n}
  = \frac{2\,{\rm area}\, \psi^{-1}(c)}{n \,{\rm vol}\,M} \ra 0,
 \]
 as $n\ra\infty$.
 Then, by Buser's inequality \cite[Theorem 7.1]{MR683635}, $\lambda_0(M_n) \ra 0$, as $n\ra\infty$.
\end{proof}

\begin{lemma}\label{lem:two}
 For all $n\in\N$, $\lambda_0(M_n)>0$.
\end{lemma}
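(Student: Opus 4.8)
The plan is to deduce $\lambda_0(M_n)>0$ from the analogous fact for the hyperbolic surface $S_n$, transported across the quasi-isometry of Lemma~\ref{lem:qi}, using that positivity of the bottom of the $L^2$-spectrum is invariant under quasi-isometries between Riemannian manifolds of \emph{bounded geometry}. So the first thing I would check is that, for each fixed $n$, both $M_n=\H^m_\C/\Gamma_n$ (where $\Gamma_n<\pi_1(M)$ is the subgroup with $M_n=\H^m_\C/\Gamma_n$, namely the preimage of $\pi_1(\hat S_n)$ under $\psi_*$) and $S_n=\H^2_\R/\pi_1(\hat S_n)$ have bounded geometry. The curvature bounds are automatic since these are quotients of pinched Hadamard manifolds. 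For the injectivity radius: $\Gamma_n$ is a subgroup of $\pi_1(M)$ and $\pi_1(\hat S_n)$ a subgroup of $\pi_1(S)$, and since $\pi_1(M),\pi_1(S)$ are torsion-free cocompact lattices, every nontrivial element of either group acts loxodromically with translation length at least the systole of $M$, resp.\ $S$; hence $\operatorname{injrad}(M_n)$ and $\operatorname{injrad}(S_n)$ are bounded below by positive constants.

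Next I would show $\lambda_0(S_n)>0$. Because $\hat S_n$ is a compact hyperbolic surface with nonempty geodesic boundary, the Fuchsian group $\pi_1(\hat S_n)$ is convex cocompact with compact convex core $\hat S_n$, but the surface $S_n$ has infinite area: it carries a funnel over each component of $\partial\hat S_n$. Thus $\pi_1(\hat S_n)$ is not a lattice, its limit set is a proper closed subset of $\partial\H^2_\R$, and $\delta_{\pi_1(\hat S_n)}<1$. By the Elstrodt-Patterson-Sullivan formula for $\H^2_\R$ (the two-dimensional, real-hyperbolic analogue of the theorem quoted above; see \cite{Corlette2}), $\lambda_0(S_n)=\delta_{\pi_1(\hat S_n)}(1-\delta_{\pi_1(\hat S_n)})>0$ if $\delta_{\pi_1(\hat S_n)}\ge 1/2$, and $\lambda_0(S_n)=1/4$ otherwise; in either case $\lambda_0(S_n)>0$.

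Finally I would invoke the transport mechanism: by Cheeger's inequality together with Buser's inequality~\cite{MR683635} (the latter using the curvature lower bound), for a manifold of bounded geometry the property $\lambda_0>0$ is equivalent to positivity of the Cheeger isoperimetric constant, and by Kanai's theorem on rough isometries this last property is a quasi-isometry invariant within the class of manifolds of bounded geometry. Since $M_n$ is quasi-isometric to $S_n$ by Lemma~\ref{lem:qi}, the previous two steps give $\lambda_0(M_n)>0$. I expect the point requiring the most care to be the verification that the hypotheses of Kanai's theorem genuinely hold---in particular, that the uniform positive lower bound on the injectivity radius is exactly what licenses comparing volumes and boundary areas of domains across the quasi-isometry; the curvature bounds and the surface computation are routine.

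One could also bypass Lemma~\ref{lem:qi}: the (generally non-normal) covering $M_n\to M$ of the closed manifold $M$ has fiber the coset space $\pi_1(M)/\Gamma_n\cong\pi_1(S)/\pi_1(\hat S_n)$, which is non-amenable because $\pi_1(\hat S_n)$ is a quasiconvex subgroup of infinite index in the hyperbolic group $\pi_1(S)$; then $\lambda_0(M_n)>0$ follows from Brooks' theorem on amenable coverings.
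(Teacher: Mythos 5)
Your main argument is essentially the paper's proof: both establish uniform lower bounds on the injectivity radii of $M_n$ and $S_n$, transport positivity of $\lambda_0$ across the quasi-isometry of Lemma~\ref{lem:qi} (the paper cites \cite[Corollary 18.17]{MR3753580}, whose content is exactly the Cheeger--Buser--Kanai mechanism you spell out), and conclude via the Elstrodt--Patterson--Sullivan formula applied to the infinite-area convex cocompact surface $S_n$. Your closing alternative via non-amenability of the Schreier coset space $\pi_1(S)/\pi_1(\hat S_n)$ and Brooks-type theorems is a valid independent route not taken in the paper, but the proof you actually develop matches theirs.
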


\begin{proof}
 Since $M_n$ and $S_n$ are (nonregular) covers of $M$ and $S$, respectively,  the injectivity radii of $M_n$ and $S_n$ are uniformly bounded from below by that of $M$ and $S$, respectively.
 Since $S_n$ and $M_n$ are quasiisometric (see Lemma \ref{lem:qi}), by \cite[Corollary 18.17]{MR3753580},
 \[
  \lambda_0(M_n) > 0 \iff \lambda_0(S_n) >0.
 \]
 Finally, $S_n$ is a convex cocompact (real) hyperbolic surface of infinite area.
 Thus, the critical exponent of $\pi_1(S_n)<{\rm PSL}(2,\R)$ is strictly smaller than $1$.
 Applying the Elstrodt-Patterson-Sullivan theorem, we obtain that $\lambda_0(S_n)>0$;
 see \cite[Theorem 2.17]{Sullivan}.
 Therefore, by above, $\lambda_0(M_n) > 0$.
\end{proof}

\begin{remark}\label{rem:fg}
 We show that, up to passing to a finite degree cover of $M$, one may choose $S$ and $\psi$ in \eqref{eqn:main_example} so that the group $K = \ker \psi_*$ in \eqref{eqn:SES} is finitely generated:
 
  Let $f: X \to C$ be a surjective holomorphic map with connected fibers, where $X$ is a compact K\"ahler manifold and $C$ is a compact Riemann surface.
  Following Catanese \cite[\S4]{Catanese}, we obtain that $f$ induces an exact sequence
 \[
  \pi_1(F) \to \pi_1(X) \xrightarrow{f_*} \pi_1^{\rm orb}(f) \to 1,
 \]
 where $F$ is a smooth fiber of $f$ and $\pi_1^{\rm orb}(f)$ is the orbifold fundamental group of an associated orbifold structure on $C$.
 Since $F$ is compact, $\pi_1(F)$ and hence its image in $\pi_1(M)$, which we denote by $K$,  is finitely generated.
 Assuming $C$ has genus at least $2$, we can realize $\pi_1^{\rm orb}(f)$ as a uniform lattice in ${\rm PSL}(2,\R)$.
 After passing to a finite index torsion-free subgroup $\Sigma$ of $\pi_1^{\rm orb}(f)$, we thus have a short exact sequence
 \[
  1 \to K \to \Pi \xrightarrow{\psi_*} \Sigma \to 1,
 \]
 where $\Pi = f_*^{-1}(\Sigma)$ is a finite index subgroup of $\pi_1(X)$.
 Let $X'$ be the finite degree cover of $X$ corresponding to the subgroup $\Pi$ of $\pi_1(X)$.
 Since $K$ is finitely generated, 
 it follows from \cite[Theorem 4.3]{Catanese} that the epimorphism $\psi_*$ above is induced by a  surjective holomorphic map (with connected fibers) from $X'$
 to a compact hyperbolic Riemann surface.
\end{remark}

\section{Proof of the main result}\label{sec:proof}
We prove our main result:

\begin{proof}[Proof of Theorem \ref{mainthm}]
 The existence of compact complex hyperbolic $2$-manifolds fibering over a hyperbolic Riemann surface was first shown by Livne \cite{Livne}.
Moreover, Deraux \cite[Theorem 3.1(iv,v)]{MR2753711} showed that many  Deligne--Mostow $2$-ball quotients admit non-constant holomorphic maps onto some hyperbolic Riemann surfaces, and so does one of the Deligne-Mostow $3$-ball quotients.
Therefore, for $m=2,3$, there exists a compact complex hyperbolic $m$-manifold  $M$ admitting a surjective holomorphic map with connected fibers to a hyperbolic Riemann surface; see \S\ref{sec:construction}.
Let $M_n$, $n\in\N$, be the  complex hyperbolic $m$-manifold constructed in the previous section (see \eqref{eqn:Mn}).
Define $\Ga_n \coloneqq \pi_1(M_n) < {\rm PU}(m,1)$.
Combining Lemmata \ref{lem:one}, \ref{lem:two}, and Corollary \ref{cor},
 we obtain that,
 \[
 \delta_{\Ga_n} < 2m, \quad\text{for all }n\in\N,
 \]
 and
 \[
  \lim_{n\ra \infty} \delta_{\Ga_n} = 2m.
 \]
 Finally, it follows from the discussion in \S\ref{sec:construction} that for all $n\in\N$, the group $\Ga_n$ fits into a short exact sequence, 
 \[
 1\to K \to \Ga_n \to \pi_1(\hat S_n) \to 1,
 \]
 cf. \eqref{eqn:Mn}.
 Note that $\pi_1(\hat S_n)$ is finitely generated.
 Moreover, by Remark \ref{rem:fg}, we may (and will) also assume that $K$ is finitely generated.
 Therefore, $\Ga_n$ is finitely generated.
\end{proof}

\begin{remark}\label{final_remark}
 As a final remark, we verify that the subgroups $\Ga_n<{\rm PU}(m,1)$ in the proof above are not geometrically finite:
 Indeed, each $\Ga_n$ contains a copy of the group $K$. Since $\Lambda(K) = \partial\H^m_\C$ (see \eqref{eqn:dK}),
 $\Lambda(\Ga_n) = \partial\H^m_\C$.
 However, we also obtained above that $\delta_{\Ga_n} < 2m$.
 Geometrically finite subgroups of $\PU(m,1)$ cannot satisfy both properties simultaneously; see \cite{Corlette2}.
\end{remark}

\bigskip
Department of Mathematics,
  Yale University, 
   219 Prospect St, New Haven, CT 06511
   
subhadip.dey@yale.edu

\medskip
Department of Mathematics,
    The Ohio State University, 100 Math Tower, 231 West 18th Avenue,
   Columbus, OH 43210
   
   bbliumath@gmail.com

\end{document}